\newtheorem{thm}{Theorem}[section]
\newtheorem{cor}[thm]{Corollary}
\newtheorem{lem}[thm]{Lemma}
\theoremstyle{definition}
\theoremstyle{remark}
\newtheorem{rem}[thm]{Remark}
\theoremstyle{example}
\theoremstyle{conjecture}
\numberwithin{equation}{section}
\newcommand{\eps}{\varepsilon}
\newcommand{\del}{\delta}
\def\dlim{\displaystyle\lim}
\def\begr{\begin{eqnarray}} \def\endr{\end{eqnarray}}
\def\msk{\medskip}
\newcommand{\BB}{{\mathbb B}}
\newcommand{\DD}{{\mathbb D}}
\newcommand{\RR}{{\mathbb R}}
\newcommand{\SSS}{{\mathbb S}}
\newcommand{\CC}{{\mathbb C}}
\newcommand{\NN}{{\mathbb N}}
\newcommand{\re}{{\rm Re}\,}
\begin{document}

\title[Hadamard gap series]{Hadamard gap series in weighted-type spaces on the unit ball}%

\date{\today}%

\author{  Bingyang Hu and Songxiao Li  }%
\address{Bingyang Hu: Department of Mathematics, University of Wisconsin, Madison, WI 53706-1388, USA.}%
\email{BingyangHu@math.wisc.edu}%%

\address{Songxiao Li:  Institute of Systems Engineering,
Macau University of Science and Technology,  Avenida Wai Long,
Taipa, Macau. }%
\email{jyulsx@163.com }%%

\subjclass[2010]{32A05, 32A37, 47B33}%

\keywords{Weighted-type space, Hadamard gaps, weighted composition operator, mixed norm space}%

% ----------------------------------------------------------------

\maketitle

\begin{abstract}
We give a sufficient and necessary condition for an analytic function $f(z)$ on the unit ball $\BB$ in $\CC^n$ with Hadamard gaps, that is, for $f(z)=\sum_{k=1}^\infty P_{n_k}(z)$ where $P_{n_k}(z)$ is a homogeneous polynomial of degree $n_k$ and $n_{k+1}/n_k \ge c>1$ for all $k \in \NN$, to belong to the weighted-type space $H^\infty_\mu$ and the corresponding little weighted-type space $H^\infty_{\mu, 0}$, under some condition posed on the weighted funtion $\mu$. We also   study the growth rate of those functions in $H^\infty_\mu$. Finally, we characterize the boundedness and compactness of weighted composition operator from weighted-type space  $H^\infty_\mu$ to mixed norm spaces.
\end{abstract}

% ----------------------------------------------------------------
\bigskip
\section{Introduction}

Let $\BB$ be the open unit ball in $\CC^n$ with $\SSS$ as its boundary
and $H(\BB)$ the collection of all holomorphic functions in $\BB$.
$H^{\infty}(\BB)$ denotes the Banach space consisting of all bounded
holomorphic functions in $\BB$ with the norm $\|f\|_{\infty}=\sup\limits_{z \in \BB} |f(z)|$.

A positive continuous function $\mu$ on $[0, 1)$ is called \emph{normal} if there exists
positive numbers $\alpha$ and $\beta$, $0<\alpha<\beta$, and $\del \in (0, 1)$
such that (see, e.g., \cite{SW})
\begr \begin{split}
&\frac{\mu(r)}{(1-r)^{\alpha}} \  \textrm{is decreasing on}  \ [\del, 1),
\quad \lim_{r \to 1} \frac{\mu(r)}{(1-r)^{\alpha}}=0,\\
& \frac{\mu(r)}{(1-r)^{\beta}} \  \textrm{is increasing on}  \ [\del, 1),
\quad \lim_{r \to 1} \frac{\mu(r)}{(1-r)^{\beta}}=\infty.
\end{split} \endr
Note that a normal function $\mu: [0, 1) \to [0, \infty)$ is decreasing in a neighborhood of $1$ and satisfies $\lim\limits_{r \to 1^{-}}\mu(r)=0$.

An $f \in H(\BB)$ is said to belong to the weighted-type space, denoted by $H_\mu^\infty=H_\mu^\infty(\BB)$ if
$$
\|f\|=\sup_{z \in \BB} \mu(|z|)|f(z)|<\infty,
$$
where $\mu$ is normal on $[0, 1)$ (see, e.g. \cite{SS1}). It is well-known that $H_\mu^\infty$ is a Banach space with the norm $\| \cdot \|$.

The little weighted-type space, denoted by $H_{\mu, 0}^\infty$, is the closed subspace of $H_\mu^\infty$ consisting of those $f \in H_\mu^\infty$ such that
$$
\lim_{|z| \to 1^{-}} \mu(|z|)|f(z)|=0.
$$
When $\mu(|z|)=(1-|z|^2)^{\alpha}, \alpha>0$, the induced spaces $H^\infty_\mu$ and $H^\infty_{\mu, 0}$ become the Bers-type space and little Bers-type space respectively.

Let $\phi$ be a normal function on $[0,1)$. For $0<p,q<\infty$, the
\emph{mixed-norm space $H(p,q,\phi)=H(p,q,\phi)(\mathbb{B})$} is the space  consisting
of all $f\in H(\mathbb{B})$ such that
 \begr
  \|f\|_{H(p,q,\phi)}=\left(\int_0^1M^p_q(f,r)\frac{\phi^{p}(r)}{1-r}
dr\right)^{1/p}<\infty,\nonumber
\endr
where
$$M_q(f,r)=\bigg(\int_\SSS |f(r\zeta)|^qd\sigma(\zeta)\bigg)^{1/q}, $$
and   $\sigma$ is the normalized area measure on $\SSS$.

Let $\varphi$ be a holomorphic self-map of $\mathbb{B}$ and $u\in H(\mathbb{B})$. For $f\in H(\mathbb{B})$, the \emph{weighted composition operator} $uC_\varphi$ is defined by
  \begr
   (u C_\varphi f)(z)=u(z)f(\varphi(z)),\quad z\in \mathbb{B}.\nonumber
  \endr
The weighted composition operator can be regarded as a generalization of the \emph{multiplication operator} and the \emph{composition operator}, which are defined by $M_u (f)=(uf)(z)$ and $(C_\varphi f)(z)= f(\varphi(z))$, respectively. See \cite{cm} for more information on this topic.

We say that an $f\in H(\BB)$ has the \emph{Hadamard gaps} if
$$
f(z)=\sum_{k=0}^{\infty} P_{n_k}(z),
$$
where $P_{n_k}$ is a homogeneous polynomial of degree $n_k$ and there exists some $c>1$ (see. e.g.,
\cite{SS}),
$$
 \frac{n_{k+1}}{n_k}\ge c,\ \forall k \ge 0.
$$

Hadamard gap series on spaces of holomorphic functions in the unit disc $\DD$ or
in the unit ball $\BB$ has been studied quite well. We refer the readers to the
related results in \cite{JSC, LS, JM, SS, SS1,
WZ, SY, YX, zz, Zhu2} and the reference therein.

In \cite{YX}, the authors studied the Hadamard gap series and the growth rate of the functions  in $H^\infty_\mu$ in the unit disk. Motivated by \cite{YX},
the aim of this paper is to study the Hadamard gap series in $H^\infty_\mu$, as well as its little space $H^\infty_{\mu, 0}$ on the unit ball. Moreover, as an application of our main result, we characterize the growth rate of those functions in $H^\infty_\mu$.  Finally, we give some sufficient and necessary conditions for the boundedness and compactness of weighted composition operators from weighted-type space  $H^\infty_\mu$ to mixed norm spaces.

Througout this paper, for $a, b \in \RR$, $a \lesssim b$ ($a \gtrsim b$, respectively) means there exists a positive number $C$, which is independent of $a$ and $b$, such that $a \leq Cb$ ($ a \geq Cb$, respectively). Moreover, if both $a \lesssim b$ and $a \gtrsim b$ hold, then we say $a \simeq b$.

\section{Hadamard gap series in $H^\infty_\mu$ and $H^\infty_{\mu, 0}$}
Let $f(z)=\sum\limits_{k=0}^\infty P_k(z)$ be  a holomorphic function in $\BB$, where $P_k(z)$ is a homogeneous polynomial with degree $k$. For $k\ge 0$, we denote
$$
M_k=\sup_{\xi \in \SSS} |P_k(\xi)|.
$$

We have the following estimations on $M_k$ of a holomorphic function $f \in H^\infty_{\mu}$ (or $f \in H^\infty_{\mu, 0}$, respectively).

\begin{thm} \label{thm001}
Let $\mu$ be a normal function on $[0,1)$.  Let $f(z)=\sum\limits_{k=0}^\infty P_{k}(z), z \in \BB$. Then the following statements hold.
\begin{enumerate}
\item If $f \in H^\infty_\mu$, then $\sup\limits_{k \ge 0} M_k \mu\left(1-\frac{1}{k}\right)<\infty$. \\
\item If $f \in H^\infty_{\mu, 0}$, then $\lim\limits_{k \to \infty} M_k \mu\left(1-\frac{1}{k}\right)=0$.\\
\end{enumerate}
\end{thm}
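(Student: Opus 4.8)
The plan is to reduce this $n$-variable estimate to a one-variable Cauchy coefficient bound by slicing. Fix $\xi \in \SSS$ and consider the slice function $f_\xi(\lambda) = f(\lambda\xi)$, which is holomorphic on the unit disk $\DD$. Since each $P_k$ is homogeneous of degree $k$, one has $P_k(\lambda\xi) = \lambda^k P_k(\xi)$, so the power series of $f_\xi$ about the origin is $f_\xi(\lambda) = \sum_{k=0}^\infty P_k(\xi)\lambda^k$; in other words $P_k(\xi)$ is precisely the $k$-th Taylor coefficient of $f_\xi$. Applying the Cauchy coefficient formula on the circle $|\lambda| = r$, estimating trivially, and taking the supremum over $\xi \in \SSS$ (recalling that $|\lambda\xi| = r$), I would obtain, for every $r \in (0,1)$,
\[
M_k \le \frac{1}{r^k}\sup_{|z| = r}|f(z)|.
\]
This single inequality is the engine of both parts.

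For part (1), if $f \in H^\infty_\mu$ then $\mu(|z|)|f(z)| \le \|f\|$, hence $\sup_{|z|=r}|f(z)| \le \|f\|/\mu(r)$, and the displayed bound becomes $M_k\,\mu(r) \le \|f\|/r^k$ for all $r \in (0,1)$. The decisive step is to choose $r = 1 - 1/k$, so that the weight argument matches the one appearing in the statement; it then remains only to control the factor $r^k = (1 - 1/k)^k$, which increases to $e^{-1}$ and in particular satisfies $(1-1/k)^k \ge 1/4$ for all $k \ge 2$. This yields $M_k\,\mu(1 - 1/k) \le 4\|f\|$ for $k \ge 2$, and the finitely many remaining indices contribute only a bounded amount, giving the claimed uniform bound.

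For part (2), I would run the same argument with $\omega(r) = \mu(r)\sup_{|z|=r}|f(z)|$ in place of $\|f\|$. The membership $f \in H^\infty_{\mu,0}$ is exactly the statement that $\omega(r) \to 0$ as $r \to 1^-$. Feeding $\sup_{|z|=r}|f(z)| \le \omega(r)/\mu(r)$ into the Cauchy bound and again setting $r = 1 - 1/k$ gives $M_k\,\mu(1-1/k) \le \omega(1-1/k)/(1-1/k)^k \le 4\,\omega(1-1/k)$ for $k \ge 2$. Since $1 - 1/k \to 1^-$, the right-hand side tends to $0$, which is precisely the desired conclusion.

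The argument is short and, notably, uses only positivity and continuity of $\mu$ rather than its full normality; the genuine ideas are the slicing that turns $P_k(\xi)$ into a Taylor coefficient and the calibrated choice $r = 1 - 1/k$, which both aligns the weight with the statement and keeps $r^k$ bounded below. I do not anticipate a serious obstacle; the one point requiring a little care is the interpretation of the small indices $k = 0, 1$ in the supremum, which I would dispose of separately, since they affect neither the finiteness of the supremum in (1) nor the limit in (2).
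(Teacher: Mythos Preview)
Your proposal is correct and follows essentially the same route as the paper: slice to the disk via $f_\xi(\lambda)=f(\lambda\xi)$, apply the Cauchy coefficient estimate to get $M_k\le r^{-k}\sup_{|z|=r}|f(z)|$, and then choose $r=1-1/k$ with the elementary bound $(1-1/k)^k\ge 1/4$. Your observation that only positivity and continuity of $\mu$ are used (not full normality) is accurate and worth keeping.
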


\begin{proof}
(1). Suppose $f \in H^\infty_\mu$.  Fix a $\xi \in \SSS$ and denote
$$
f_\xi(w)=\sum_{k=0}^\infty P_{k}(\xi) w^k=\sum_{k=0}^\infty
P_{k}(\xi w), ~~ w \in \DD.
$$
Since $f \in H(\BB)$, it is known that for a fixed $\xi \in
\SSS$, $f_\xi(w)$ is holomorphic in $\DD$ (see, e.g., \cite{Rud}).
Hence, for any $r \in (0,
1)$, we have \newpage
\begin{eqnarray} \label{ineq0500}
M_k%
&=& \sup_{ \xi \in   \SSS} |P_k(\xi)|=\sup_{ \xi \in   \SSS} \left |\frac{1}{2\pi i} \int_{|w|=r} \frac{f_\xi(w)}{w^{k+1}}dw\right| \\
&=& \frac{1}{2\pi} \sup_{ \xi \in \SSS}\left |\int_{|w|=r} \frac{f( \xi w)}{w^{k+1}}dw\right|  \nonumber \\
&\le& \frac{1}{2\pi}\sup_{ \xi \in   \SSS} \int_{|w|=r} \frac{|f( \xi w)|}{r^{k+1}}|dw|  \nonumber \\
&=& \frac{1}{2\pi}\sup_{ \xi \in  \SSS} \int_{|w|=r} \frac{|f( \xi w)| \mu(|\xi w|)}{r^{k+1} \mu(r)}|dw| \nonumber \\
&\le& \frac{\|f\|}{r^k\mu(r)}. \nonumber
\end{eqnarray}
In \eqref{ineq0500}, letting $r=1-\frac{1}{k}, k \ge 2, k \in \NN$, we have
$$
M_k \le \frac{\|f\|}{(1-\frac{1}{k})^k\mu\left(1-\frac{1}{k}\right)}.
$$
Thus, for each $k \ge 2$,
$$
M_k\mu\left(1-\frac{1}{k}\right) \le \frac{\|f\|}{(1-\frac{1}{k})^k} \le 4\|f\|,
$$
which implies that
$$
\sup_{k \ge 1} M_k\mu\left(1-\frac{1}{k}\right) \le \max \left\{\mu(0)M_1,  4\|f\| \right\}<\infty.
$$

(2). Suppose $f \in H^\infty_{\mu, 0}$, that is, for any $\varepsilon>0$, there exists a $\del \in (0, 1)$, when $\del<|z|<1$,
$$
\mu(|z|)|f(z)|< \varepsilon.
$$
Take $N_0 \in \NN$ satisfying $\del<1-\frac{1}{k}<1$ when $k>N_0$.
Then for any $k > N_0$ and $r=1-\frac{1}{k}$, as the proof
in the previous part, we have
$$
M_k \le \frac{1}{ (1-\frac{1}{k})^k \mu\left(1-\frac{1}{k}\right)}  \cdot \sup_{
\del<|z|<1}  \mu(|z|)|f(z)|   <\frac{\varepsilon}{ (1-\frac{1}{k})^k \mu\left(1-\frac{1}{k}\right)},
$$
which implies
$$
 M_k \mu\left(1-\frac{1}{k}\right) \le \frac{\varepsilon}{ (1-\frac{1}{k})^k}
\le 4 \varepsilon, ~~k>N_0.
$$
 Hence we have  $\dlim_{k \to \infty} M_k\mu\left(1-\frac{1}{k}\right)=0$.
\end{proof}

\begin{thm} \label{thm002}
Let $\mu$ be a normal function on $[0,1)$.  Let $f(z)=\sum\limits_{k=0}^\infty P_{n_k}(z)$ with Hadamard gaps, where $P_{n_k}$ is a homogeneous polynomial of degree $n_k$. Then the following assertions hold.
\begin{enumerate}
\item $f \in H^\infty_\mu$ if and only if $\sup\limits_{k \ge 1} \mu\left(1-\frac{1}{n_k}\right) M_{n_k}<\infty$.
\item $f \in H^\infty_{\mu, 0}$ if and only if $\lim\limits_{k \to \infty} \mu\left(1-\frac{1}{n_k}\right) M_{n_k}=0$.
\end{enumerate}
\end{thm}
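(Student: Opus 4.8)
The plan is to treat the two equivalences together, dispatching necessity quickly as a consequence of Theorem \ref{thm001} and then devoting the bulk of the work to sufficiency, where the Hadamard gap structure is essential.

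For necessity I regard the gap series $f(z)=\sum_k P_{n_k}(z)$ as an ordinary homogeneous expansion $\sum_{m\ge 0}Q_m(z)$ in which $Q_m=P_{n_k}$ when $m=n_k$ for some $k$ and $Q_m=0$ otherwise; accordingly $\sup_{\xi\in\SSS}|Q_m(\xi)|$ equals $M_{n_k}$ on the gap indices and vanishes elsewhere. Statement (1) of Theorem \ref{thm001} applied to $f$ then yields $\sup_{m\ge 0}\mu(1-1/m)\sup_{\xi}|Q_m(\xi)|<\infty$, and since only the indices $m=n_k$ contribute this is precisely $\sup_{k\ge 1}\mu(1-1/n_k)M_{n_k}<\infty$. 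The little-space necessity follows identically from statement (2). So the "only if" parts need no new argument.

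For sufficiency I start from the pointwise majorant. Writing $z=r\xi$ with $r=|z|$ and $\xi\in\SSS$, homogeneity gives $|P_{n_k}(z)|=r^{n_k}|P_{n_k}(\xi)|\le r^{n_k}M_{n_k}$, whence $|f(z)|\le\sum_k r^{n_k}M_{n_k}$. Under the hypothesis $M_{n_k}\le C/\mu(1-1/n_k)$ it therefore suffices to bound, uniformly in $r\in[0,1)$, the quantity $S(r)=\mu(r)\sum_k r^{n_k}/\mu(1-1/n_k)$. On a compact set $r\le\delta'$ this is finite since $f$ is holomorphic and $\mu$ is continuous, so I fix $r$ close to $1$, choose the integer $N=N(r)$ with $n_N\le 1/(1-r)<n_{N+1}$, and split $S(r)$ into the low part $k\le N$ (where $1-1/n_k\le r$) and the high part $k>N$ (where $1-1/n_k>r$). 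The two halves are controlled by feeding the normality of $\mu$ into the ratio $\mu(r)/\mu(1-1/n_k)$. For $k\le N$, monotonicity of $\mu(s)/(1-s)^\alpha$ gives $\mu(r)/\mu(1-1/n_k)\le (n_k(1-r))^\alpha$; since the gap condition yields $n_k\le n_N c^{-(N-k)}\le c^{-(N-k)}/(1-r)$, this term is at most $c^{-\alpha(N-k)}$, and summing over $k\le N$ produces a convergent geometric series bounded independently of $r$. For $k>N$, monotonicity of $\mu(s)/(1-s)^\beta$ gives $\mu(r)/\mu(1-1/n_k)\le (n_k(1-r))^\beta$, and combining this with the elementary decay $r^{n_k}\le e^{-n_k(1-r)}$ bounds the term by $e^{-n_k(1-r)}(n_k(1-r))^\beta\lesssim e^{-n_k(1-r)/2}$; since $n_k(1-r)>c^{k-N-1}$ by the gaps, this is $\lesssim e^{-c^{k-N-1}/2}$, whose sum over $k>N$ converges super-geometrically, again uniformly in $r$. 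The high part is the main obstacle: it is here that one must play the exponential smallness of $r^{n_k}$ against the rapid growth of $n_k(1-r)$ forced by the gap condition, and obtaining a bound free of $r$ rests entirely on that interplay.

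Finally, the little-space equivalence uses the same estimates with an $\eps$-cutoff. Given $\eps>0$, choose $K$ with $M_{n_k}\mu(1-1/n_k)<\eps$ for $k>K$, and split $\mu(r)|f(z)|$ into the finite head $k\le K$ and the tail $k>K$. The tail is $\le\eps\,S(r)\le C\eps$ by the uniform bound just established, while the head is a finite sum of terms $\mu(r)r^{n_k}M_{n_k}$, each tending to $0$ as $r\to 1$ because $\mu(r)\to 0$. Hence $\limsup_{|z|\to 1}\mu(|z|)|f(z)|\le C\eps$, and letting $\eps\to 0$ gives $f\in H^\infty_{\mu,0}$, completing sufficiency in both parts.
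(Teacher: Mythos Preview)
Your proof is correct, but your route to sufficiency in part (1) is genuinely different from the paper's.  The paper multiplies the majorant $\sum_k r^{n_k}/\mu(1-1/n_k)$ by $1/(1-r)=\sum_m r^m$, regroups, and quotes \cite[Theorem~2.3]{YX} to obtain
\[
\frac{|f(z)|}{1-|z|}\lesssim\sum_{m\ge 1}\Big(\sum_{n_k\le m}\frac{1}{\mu(1-1/n_k)}\Big)|z|^m
\lesssim\sum_{m\ge 1}\frac{|z|^m}{\mu(1-1/m)}\lesssim\frac{1}{(1-|z|)\mu(|z|)},
\]
so the lacunarity enters only through the inner estimate $\sum_{n_k\le m}1/\mu(1-1/n_k)\lesssim 1/\mu(1-1/m)$, and the outer bound on the full power series is imported from \cite{YX}.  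You instead work directly with $S(r)$: choosing $N$ with $n_N\le 1/(1-r)<n_{N+1}$ and applying the $\alpha$-monotonicity of $\mu$ on the low block and the $\beta$-monotonicity together with $r^{n_k}\le e^{-n_k(1-r)}$ on the high block is a clean, self-contained argument that makes explicit exactly where each half of the normality definition and the gap ratio $c>1$ are consumed.  What you gain is independence from \cite{YX} and a transparent mechanism; what the paper's route gains is brevity on the page, at the cost of outsourcing the key estimate.  For part (2) the paper proceeds by showing the partial sums lie in $H^\infty_{\mu,0}$ and converge to $f$ in norm, whereas your $\eps$-head/tail argument verifies the little-$o$ condition directly; both are standard and equally short.
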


\begin{proof}
 By Theorem \ref{thm001}, it suffices to show the sufficiency of both statements.

(1). Noting that
$$
|f(z)|=\left|\sum_{k=0}^\infty P_{n_k} \left(\frac{z}{|z|}\right) |z|^{n_k}\right| \leq \sum_{k=0}^{\infty} M_{n_k}|z|^{n_k} \lesssim \sum_{k=0}^{\infty} \frac{|z|^{n_k}}{\mu\left(1-\frac{1}{n_k}\right)},
$$
from the proof of  \cite[Theorem 2.3]{YX}, we have
\begin{eqnarray*}
 \frac{|f(z)| }{1-|z|}&\lesssim&  \sum_{m=1}^\infty\Big( \sum_{n_k\leq m} \frac{1}{\mu\left(1-\frac{1}{n_k}\right)} \Big)|z|^m\lesssim \sum_{m=1}^\infty  \frac{|z|^m}{\mu\left(1-\frac{1}{m}\right)}\\
&\lesssim& \frac{1}{(1-|z|)\mu(|z|)} ,
\end{eqnarray*}
which implies $f \in H^\infty_\mu$, as  desired.

(2).  Since $\lim\limits_{k \to \infty} \mu\left(1-\frac{1}{n_k}\right) M_{n_k}=0$, we have $\sup\limits_{k \ge 1} \mu\left(1-\frac{1}{n_k}\right) M_{n_k}<\infty$. Hence by part (1), we have $f \in H^\infty_\mu$. For any $\varepsilon>0$, there exists a $N_0 \in \NN$
satisfying when $m>N_0$,
$$
M_{n_m} \mu\left(1-\frac{1}{n_m}\right)<\varepsilon.
$$
For each $m \in \NN$, put $f_m(z)=\sum\limits_{k=0}^m
P_{n_k}(z)$. Note that
\begin{eqnarray*}
\mu(|z|)|f_m(z)|%
&\le& \mu(|z|)\bigg(\sum_{k=0}^m |P_{n_k}(z)|\bigg) \\
&=&\mu(|z|)\left(\sum_{k=0}^m \left| P_{n_k}\left(\frac{z}{|z|}\right) |z|^{n_k}\right|\right)\\
&\le& K_m\mu(|z|) \sum_{k=0}^m |z|^{n_k} \le mK_m\mu(|z|),
\end{eqnarray*}
where $K_m=\max\{M_{n_0}, M_{n_1}, M_{n_2}, \dots, M_{n_m}\}$. Noting that $\lim\limits_{|z| \to 1^{-}} \mu(|z|)=0$, we have $\lim\limits_{|z| \to 1^{-}} \mu(|z|)|f_m(z)|=0$, which implies for each $m \in \NN$, $f_m \in H^\infty_{\mu, 0}$. Hence it suffices to show that $\|f_m-f\| \to 0$ as $m \to \infty$. Indeed, for $m>N_0$, we have
$$
|f_m(z)-f(z)|= \left| \sum_{k=m+1}^\infty P_{n_k}(z)\right| \le \sum_{k=m+1}^\infty M_{n_k} |z|^{n_k}\le \varepsilon \sum_{k=m+1}^\infty  \frac{|z|^{n_k}}{\mu\left(1-\frac{1}{n_k}\right)}.
$$
From this, the result easily follows from the proof of part (1).
\end{proof}

% ----------------------------------------------------------------
\bigskip
\section{Growth rate}

As an application of Theorem \ref{thm002}, in this section, we show the following result.

\begin{thm}\label{thm003}
Let $\mu$ be a normal function on $[0,1)$.  Then there exists a positive integer $M=M(n)$ with the following property: there exists $f_i \in H^\infty_\mu, 1 \le i \le M$, such that
$$
\sum_{i=1}^M |f_i(z)| \gtrsim \frac{1}{\mu(|z|)}, \quad z \in \BB.
$$
\end{thm}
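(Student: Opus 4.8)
The plan is to realize the upper growth envelope $1/\mu$ by finitely many explicit Hadamard gap series and to verify their membership in $H^\infty_\mu$ through the easy direction of Theorem \ref{thm002}. Fix a lacunary sequence $\{n_k\}$ with $n_{k+1}/n_k \ge c > 1$ chosen dense enough on the logarithmic scale that every $r \in (0,1)$ is \emph{resonant} with some index, i.e. $n_k(1-r) \simeq 1$ for some $k$; concretely one may take $n_k = \lceil c^k \rceil$. Put $a_k = 1/\mu\left(1-\frac{1}{n_k}\right)$. The functions I will construct all have the form $f_j(z) = \sum_k a_k W^{(j)}_{n_k}(z)$, where $W^{(j)}_s$ is a homogeneous polynomial of degree $s$ with $\sup_{\xi \in \SSS}|W^{(j)}_s(\xi)| \le 1$. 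For each such $f_j$ the homogeneous block of degree $n_k$ is $a_k W^{(j)}_{n_k}$, so $M_{n_k} \le a_k$ and hence $\mu\left(1-\frac{1}{n_k}\right) M_{n_k} \le a_k\,\mu\left(1-\frac{1}{n_k}\right) = 1$; by Theorem \ref{thm002}(1) this gives $f_j \in H^\infty_\mu$ for every $j$, independently of the choice of the $W^{(j)}_s$.

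The role of the several complex variables, and the source of the dimensional constant $M = M(n)$, is the choice of the polynomials $W^{(j)}_s$. A single monomial such as $z_1^s$ is useless for a pointwise lower bound because its modulus degenerates like $n^{-s/2}$ away from $e_1$ as $s \to \infty$. Instead I would invoke the flat homogeneous polynomials of Ryll and Wojtaszczyk, for which $\|W_s\|_{L^\infty(\SSS)} \le 1$ while $\|W_s\|_{L^2(\SSS)} \ge \delta(n) > 0$ uniformly in $s$. From this uniform $L^2$ mass one extracts, for a number $M = M(n)$ of rotations $U_1,\dots,U_M \in U(n)$ depending only on the dimension, the family $W^{(j)}_s(z) = W_s(U_j z)$ with the key feature that $\sum_{j=1}^{M}|W^{(j)}_s(\xi)| \ge c(n) > 0$ for all $\xi \in \SSS$, again uniformly in the degree $s$. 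It is precisely here that the estimates are uniform in $n$ rather than in $\mu$, so that the number of functions depends only on the dimension.

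With the $f_j$ in hand, the lower bound is proved pointwise. Fix $z \in \BB$, write $r = |z|$ and $\xi = z/|z|$, and let $k_0 = k_0(r)$ be a resonant index, so that by normality of $\mu$ and $r^{n_{k_0}} \gtrsim 1$ the resonant term satisfies $a_{k_0} r^{n_{k_0}} \simeq 1/\mu(r)$. Choose $j_0$ with $|W^{(j_0)}_{n_{k_0}}(\xi)| \gtrsim 1$, possible by the covering property of the family. Writing $f_{j_0}(z) = a_{k_0} W^{(j_0)}_{n_{k_0}}(\xi)\, r^{n_{k_0}} + \sum_{k \ne k_0} a_k W^{(j_0)}_{n_k}(\xi)\, r^{n_k}$ and using $|W^{(j_0)}_{n_k}(\xi)| \le 1$, I would bound $|f_{j_0}(z)|$ from below by its resonant term minus $\sum_{k \ne k_0} a_k r^{n_k}$; a computation with the normality inequalities (governing $a_k/a_{k_0}$) together with the lacunarity (governing $r^{n_k}$) shows this tail is a fixed fraction of $a_{k_0} r^{n_{k_0}}$. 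Hence $\sum_j |f_j(z)| \ge |f_{j_0}(z)| \gtrsim a_{k_0} r^{n_{k_0}} \simeq 1/\mu(r)$, as desired.

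The main obstacle is the interplay, in this last step, between two competing demands on the gaps. Resonance at \emph{every} radius forces the ratio $c$ to be moderate, but then the resonant term need not dominate the remainder of its own series, so raw cancellation could destroy the estimate. The clean way around this is to build each $f_j$ from a sufficiently sparse (super-lacunary) subsequence — for instance by additionally splitting the indices into residue classes — so that within one series the resonant term provably dominates, while the union of the sparse sequences still resonates at every radius. Reconciling this splitting, the extraction of the uniformly-bounded-below family $\{W^{(j)}_s\}$ from the Ryll--Wojtaszczyk polynomials, and the tail estimate so that they all fit together under a single constant $M(n)$ is the technical heart of the argument; everything else reduces to Theorem \ref{thm002} and the elementary properties of normal weights.
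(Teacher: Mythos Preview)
Your outline correctly identifies the architecture of the argument --- Hadamard gap series with coefficients $1/\mu(1-1/n_k)$, membership in $H^\infty_\mu$ via Theorem~\ref{thm002}, a resonant index for each radius, and a splitting into sparser subsequences so that the resonant term dominates its own tail. All of this matches the paper's proof (which uses powers $p^{vM+j}$ and estimates $I_1,I_2,I_3$ along exactly these lines).

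The genuine gap is in the construction of the homogeneous polynomials. You assert that from the Ryll--Wojtaszczyk flat polynomials $W_s$ (with $\|W_s\|_\infty\le 1$ and $\|W_s\|_2\ge\delta(n)$) one can extract finitely many \emph{fixed} rotations $U_1,\dots,U_M\in U(n)$ so that $\sum_{j=1}^M |W_s(U_j\xi)|\ge c(n)$ holds for every $\xi\in\SSS$ and, crucially, for every degree $s$. The $L^2$ lower bound only tells you that the ``good'' set $\{|W_s|>\delta/2\}$ has measure bounded below; it gives no control on its geometry, which can vary wildly with $s$. For a single degree, compactness furnishes a finite cover by rotates, but the number of rotates required has no reason to be uniform in $s$, and a common choice of $U_1,\dots,U_M$ working for all degrees simultaneously does not follow from anything you have written. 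Without this, the lower bound $|W^{(j_0)}_{n_{k_0}}(\xi)|\gtrsim 1$ at the chosen resonant index is unavailable and the whole estimate collapses.

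The paper circumvents this by abandoning Ryll--Wojtaszczyk entirely and instead using Ullrich--Choe--Rim polynomials $P(z)=\sum_{\zeta\in\Gamma}\langle z,\zeta\rangle^k$ built on $d$-separated nets $\Gamma\subset\SSS$, with the separation scale tied to the degree $k$. For any $\eta\in\SSS$, maximality of the net guarantees a nearby $\zeta$, so one term $|\langle\eta,\zeta\rangle|^k$ is automatically close to~$1$; the combinatorial Lemma~\ref{lem01} then splits $\Gamma$ into $M=M(n)$ pieces so that within each piece the remaining terms are provably small (Lemma~\ref{lem02}). This is what replaces your unproven rotation step, and it is indeed the technical heart of the proof.
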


Note that the result in \cite[Theorem 2.5]{YX} in the unit disc is a particular case of Theorem \ref{thm003} when $n=1$.

\begin{rem}
We observe that $M$ cannot be $1$. Indeed, assume that there exists a $f \in H^\infty_\mu$, such that
$$
|f(z)| \gtrsim \frac{1}{\mu(|z|)}, \quad z \in \BB.
$$
It implies that $f(z)$ has no zero in $\BB$, and it follows that there exists $g \in H(\BB)$, such that $f=e^g$. Thus,
$$
|f(z)|=\left|e^{g(z)}\right|=e^{\re g(z)},
$$
which implies that $e^{\re g(z)} \gtrsim \frac{1}{\mu(|z|)}$ and hence $\re g(z) \gtrsim \log \frac{1}{\mu(|z|)}$. For each $r \in (0, 1)$, integrating on both sides of the above inequality on $r\SSS=\{z \in \BB, |z|=r\}$, we have
$$
\int_{r\SSS} \re g(z) d\sigma \gtrsim \int_{r \SSS} \log \left( \frac{1}{\mu(|z|)} \right) d\sigma=\log \left(\frac{1}{\mu(r)}\right) \cdot \sigma(r \SSS).
$$
 By the mean value property, we have $\re g(0) \gtrsim \log \left(\frac{1}{\mu(r)}\right),  \forall r \in (0, 1)$, which is impossible.
\end{rem}

Before we formulate the proof of our main result, we need some preliminary results. In the sequel, for $\xi, \zeta \in \SSS$, denote
$$
d(\xi, \zeta)=(1-|\langle \xi, \zeta \rangle|^2)^{1/2}.
$$
Then $d$ satisfies the triangle inequality (see, e.g., \cite{ABA}). Moreover, we write $E_\del(\zeta)$  for the $d$-ball with radius $\del \in (0, 1)$ and center at $\zeta \in \SSS$:
$$
E_\del(\zeta)=\left\{ \xi \in \SSS: d(\xi, \zeta)<\del\right\}.
$$
We say that a subset $\Gamma$ of $\SSS$ is \emph{d-sperated} by $\del>0$, if $d$-balls with radius $\del$ and center at points of $\Gamma$ are pairwise disjoint.

We begin with several lemmas, which play important role in the proof of our main result.

\begin{lem} \label{lem01}\cite{CR, DU} For each $a>0$, there exists a positive integer $M=M_n(a)$ with the following property: if $\del>0$, and if $\Gamma \subset \SSS$ is $d$-seperated by $a\del$, then $\Gamma$ can be decomposed into $\Gamma=\bigcup\limits_{j=1}^M \Gamma_j$ in such a way that each $\Gamma_j$ is $d$-seperated by $\del$.
\end{lem}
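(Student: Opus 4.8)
The plan is to recast the required decomposition as a graph-coloring problem. I would define a graph $G$ on the vertex set $\Gamma$ by joining two distinct points $\xi, \zeta \in \Gamma$ with an edge precisely when $E_\del(\xi) \cap E_\del(\zeta) \neq \emptyset$. A proper vertex coloring of $G$ with colors $\{1,\dots,M\}$ then produces the partition $\Gamma = \bigcup_{j=1}^M \Gamma_j$ into color classes, and by the very definition of the edges, any two points in a common $\Gamma_j$ have disjoint $\del$-balls; that is, each $\Gamma_j$ is $d$-separated by $\del$. Since a greedy (sequential) coloring uses at most $\Delta(G)+1$ colors, where $\Delta(G)$ is the maximal degree, everything reduces to bounding $\Delta(G)$ by a quantity depending only on $a$ and $n$, and \emph{not} on $\del$ or on the particular set $\Gamma$. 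I first note that $\Gamma$ is automatically finite: the balls $\{E_{a\del}(\xi)\}_{\xi \in \Gamma}$ are pairwise disjoint, each has $\sigma$-measure bounded below, and $\sigma(\SSS)=1$; hence the greedy argument applies with no set-theoretic subtlety.

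The geometric heart of the matter is the Ahlfors-regularity of $(\SSS, d, \sigma)$: for $\zeta \in \SSS$ and $0 < r \le 1$ one has $\sigma(E_r(\zeta)) \simeq r^{2n-2}$, with constants depending only on $n$. (Writing $E_r(\zeta) = \{\xi : |\langle \xi,\zeta\rangle|^2 > 1-r^2\}$ and using that $|\langle\xi,\zeta\rangle|^2$ is Beta-distributed when $\xi$ is uniform on $\SSS$, one in fact gets the measure exactly equal to $r^{2n-2}$.) I would exploit this through a standard packing estimate. Fix $\xi_0 \in \Gamma$ and let $\zeta \in \Gamma$ be any neighbor of $\xi_0$ in $G$; the triangle inequality for $d$ gives $d(\xi_0,\zeta) < 2\del$. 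On the other hand, disjointness of the $a\del$-balls forces $d(\xi,\zeta) \ge a\del$ for all distinct $\xi, \zeta \in \Gamma$, so the balls $E_{a\del/2}$ centered at $\xi_0$ together with its neighbors are pairwise disjoint and all contained in $E_{(2+a/2)\del}(\xi_0)$. Comparing measures (which depend only on the radius) yields
$$
\bigl(\deg_G(\xi_0)+1\bigr)\,\sigma\!\left(E_{a\del/2}\right) \le \sigma\!\left(E_{(2+a/2)\del}\right),
$$
and the volume estimate bounds the quotient of the two sides by $D(a,n) \simeq \left(\tfrac{4+a}{a}\right)^{2n-2}$, independent of $\del$. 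Hence $\Delta(G) \le D(a,n)$, and one may take $M = M_n(a) = \lfloor D(a,n)\rfloor + 1$.

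The main obstacle is obtaining the degree bound \emph{uniformly in $\del$}, and this is exactly where scale-invariance of the estimate $\sigma(E_r) \simeq r^{2n-2}$ is indispensable: in the displayed inequality the factor $\del$ cancels, so the ratio of the large and small ball measures depends only on the ratio $(2+a/2)/(a/2) = (4+a)/a$ of their radii, a function of $a$ alone. A secondary point requiring care is the bookkeeping between the two notions in play — ``separated by $\rho$'' as defined here (disjointness of $\rho$-balls) versus the pointwise inequality $d(\cdot,\cdot) \ge \rho$ — which differ by at most a factor of two and are reconciled throughout by the triangle inequality for $d$ invoked above.
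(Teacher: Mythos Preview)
The paper does not give its own proof of this lemma; it is quoted from \cite{CR, DU} and used as a black box in the proof of Theorem~\ref{thm003}. Your argument---recasting the decomposition as a proper vertex coloring of the intersection graph and bounding the maximal degree via a packing estimate based on $\sigma(E_r(\zeta))=r^{2n-2}$---is correct and self-contained, and is in the same spirit as the arguments in the cited sources. The crucial observation, that the factor $\del$ cancels in the ratio $\sigma(E_{(2+a/2)\del})/\sigma(E_{a\del/2})$ so that the degree bound depends only on $a$ and $n$, is exactly the point.

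One minor edge case you should address explicitly: when $(2+a/2)\del>1$ the formula $\sigma(E_r)=r^{2n-2}$ no longer applies to the outer ball. But then $\del>2/(4+a)$, and combining the trivial bound $\sigma(E_{(2+a/2)\del}(\xi_0))\le 1$ with $\sigma(E_{a\del/2}(\cdot))=(a\del/2)^{2n-2}\ge (a/(4+a))^{2n-2}$ still yields $\deg_G(\xi_0)+1\le ((4+a)/a)^{2n-2}$, so the same choice of $M$ works. (If $a\del>1$ then $d\le 1$ forces $\Gamma$ to be a single point and there is nothing to prove.)
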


\begin{lem} \label{lem02}\cite[Lemma 2.3]{CR}
Suppose that $\Gamma \subset \SSS$ is $d$-seperated by $\del$ and let $k$ be a positive integer. If
$$
P(z)=\sum_{\zeta \in \Gamma} \langle z, \zeta \rangle^k, \quad  z \in \BB,
$$
then
$$
|P(z)| \le 1+\sum_{m=1}^\infty (m+2)^{2n-2}e^{-m^2\del^2k/2}.
$$
\end{lem}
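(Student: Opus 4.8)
The plan is to reduce the estimate to the unit sphere and then combine a pointwise exponential bound on $|\langle \eta, \zeta\rangle|^k$ with a packing count for the $d$-separated set $\Gamma$. First I would dispose of the trivial case $z=0$ (where $P(0)=0$ for $k\ge 1$) and, for $z\ne 0$, write $z=|z|\eta$ with $\eta\in\SSS$. Since $|z|\le 1$, Cauchy--Schwarz gives $|\langle z,\zeta\rangle|=|z|\,|\langle\eta,\zeta\rangle|\le|\langle\eta,\zeta\rangle|$, so by the triangle inequality
$$
|P(z)|\le\sum_{\zeta\in\Gamma}|\langle\eta,\zeta\rangle|^k.
$$
The point of introducing $d$ is the identity $|\langle\eta,\zeta\rangle|^2=1-d(\eta,\zeta)^2$; combined with the elementary inequality $1-t\le e^{-t}$ this yields the key pointwise bound
$$
|\langle\eta,\zeta\rangle|^k=\left(1-d(\eta,\zeta)^2\right)^{k/2}\le e^{-k\,d(\eta,\zeta)^2/2}.
$$
Thus the whole problem reduces to controlling $\sum_{\zeta\in\Gamma}e^{-k\,d(\eta,\zeta)^2/2}$ by grouping the points of $\Gamma$ according to their $d$-distance from $\eta$.

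Second, I would organize $\Gamma$ into the shells $\Gamma_m=\{\zeta\in\Gamma:m\del\le d(\eta,\zeta)<(m+1)\del\}$, $m\ge 0$. On $\Gamma_m$ the exponent satisfies $d(\eta,\zeta)^2\ge m^2\del^2$, so every point contributes at most $e^{-km^2\del^2/2}$, giving
$$
\sum_{\zeta\in\Gamma}e^{-k\,d(\eta,\zeta)^2/2}\le\sum_{m=0}^\infty(\#\Gamma_m)\,e^{-km^2\del^2/2}.
$$
It remains to bound $\#\Gamma_m$. Here I would exploit the hypothesis: since the balls $E_\del(\zeta)$, $\zeta\in\Gamma$, are pairwise disjoint, any two distinct points of $\Gamma$ satisfy $d(\zeta,\zeta')\ge\del$, hence the half-balls $E_{\del/2}(\zeta)$ are disjoint. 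A volume comparison is then available because the nonisotropic balls for this metric satisfy $\sigma(E_\rho(\zeta))\simeq\rho^{2n-2}$ (after rotating $\zeta$ to $e_1$, the set $\{1-|\langle\xi,\zeta\rangle|^2<\rho^2\}$ is the tube $\{|\xi'|<\rho\}$ of real codimension $2n-2$ about the circle $|\xi_1|=1$). For $\zeta\in\Gamma_m$ the triangle inequality gives $E_{\del/2}(\zeta)\subset E_{(m+2)\del}(\eta)$, and since these half-balls are disjoint,
$$
(\#\Gamma_m)\cdot c\,\del^{2n-2}\le\sigma\!\left(E_{(m+2)\del}(\eta)\right)\le C\,\big((m+2)\del\big)^{2n-2},
$$
so that $\#\Gamma_m\lesssim(m+2)^{2n-2}$. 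Feeding this into the previous display and separating off the $m=0$ term produces a bound of exactly the stated shape $1+\sum_{m=1}^\infty(m+2)^{2n-2}e^{-m^2\del^2 k/2}$.

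The main obstacle is the bookkeeping that turns this packing heuristic into the \emph{precise} constants asserted. The metric $d(\xi,\zeta)=(1-|\langle\xi,\zeta\rangle|^2)^{1/2}$ is genuinely nonisotropic, being comparable to $(1-|\langle\xi,\zeta\rangle|)^{1/2}$ rather than to the Kor\'anyi distance $|1-\langle\xi,\zeta\rangle|^{1/2}$, so one must establish $\sigma(E_\rho)\simeq\rho^{2n-2}$ carefully and invoke the triangle inequality for $d$ (as recorded in the excerpt, citing \cite{ABA}) before the half-ball packing applies. Obtaining the leading coefficient exactly $1$ and the shell count exactly $(m+2)^{2n-2}$ requires a sharper count than the crude volume comparison above provides; this is precisely the content of \cite[Lemma 2.3]{CR}, whose argument I would follow for the optimal constants, the estimate above serving to explain why a bound of this form must hold.
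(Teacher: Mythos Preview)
The paper does not prove this lemma at all: it is quoted verbatim from \cite[Lemma~2.3]{CR} and used as a black box, so there is no ``paper's own proof'' to compare against. Your outline---reduce to $\eta\in\SSS$, use $|\langle\eta,\zeta\rangle|^k\le e^{-k\,d(\eta,\zeta)^2/2}$, stratify $\Gamma$ into annuli $\{m\del\le d(\eta,\zeta)<(m+1)\del\}$, and bound the cardinality of each annulus by a packing/volume argument for the nonisotropic balls $E_\rho$---is exactly the strategy of the Choe--Rim proof, and you correctly identify that the only delicate point is obtaining the sharp constants $1$ and $(m+2)^{2n-2}$ rather than unspecified absolute constants. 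Since the paper only needs the lemma through the crude consequence \eqref{eqthm0309} (that $|P|\le 2$ once $A$ is chosen via \eqref{eqthm030-1}), even your ``up to constants'' version would suffice for every application made here.
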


\textbf{Proof of Theorem \ref{thm003}.}
%In the proof of this theorem we use and modify an idea from \cite{CR}.
 We will prove the theorem by constructing $f_i \in H^\infty_\mu$ satisfying the given property only near the boundary (then, by adding a proper constant, one obtains the given property on all of the unit ball). Since $\mu$ is normal, by the definition of normal function, there exists positive numbers $\alpha, \beta$ with $0<\alpha<\beta$, and $\delta \in (0, 1)$ satisfy (1.1).  Take and fix some small positive number $A<1$ such that
\begin{equation} \label{eqthm030-1}
\sum_{m=0}^\infty (m+2)^{2n-2} e^{\frac{-m^2}{2A^2}} \le \frac{1}{27}.
\end{equation}
Let $M=M_n\left(\frac{A}{2}\right)$ be a positive integer provided by Lemma \ref{lem01} with $A/2$ in place of $a$. Let $p$ be a sufficiently large positive integer so that
\begin{equation} \label{eqthm0300}
1-\frac{1}{p} \ge \delta,
\end{equation}
\begin{equation} \label{eqthm0301}
\frac{1}{3} \le \left(1- \frac{1}{p}\right)^p \le \frac{1}{2},
\end{equation}
\begin{equation} \label{eqthm0302}
\frac{1}{p^{\alpha M}-1} \le \frac{1}{200},
\end{equation}
and
\begin{equation} \label{eqthm0303}
\frac{p^{\beta M} \cdot 2^{-p^{M-0.5}}}{1-p^{\beta M} \cdot 2^{-(p^{2M-0.5}-p^{M-0.5}})} \le \frac{1}{200}.
\end{equation}
For each postive integer $j \le M$, set $\del_{j, 0}$ such that
\begin{equation} \label{eqthm0304}
A^2p^j \del_{j, 0}^2=1
\end{equation}
and inductively choose $\del_{j ,v}$ such that
\begin{equation} \label{eqthm0305}
p^M\del_{j, v}^2=\del_{j, v-1}^2, \quad v=1,2, \dots.
\end{equation}
From \eqref{eqthm0304} and \eqref{eqthm0305}, we get
\begin{equation} \label{eqthm0306}
A^2p^{vM+j}\del_{j, v}^2=1.
\end{equation}
For each fixed $j$ and $v$, let $\Gamma^{j, v}$ be a maximal subset of $\SSS$ subject to the condition that $\Gamma^{j, v}$ is $d$-seperated by $A\del_{j, v}/2$. Then by Lemma \ref{lem01},   write
\begin{equation} \label{eqthm0307}
\Gamma^{j, v}=\bigcup_{l=1}^M \Gamma_{j, vM+l}
\end{equation}
in such a way that each $\Gamma_{j, vM+l}$ is $d$-seperated by $\del_{j, v}$.

For each $i, j=1,2, \dots, M$ and $v \ge 0$, set
$$
P_{i, vM+j}(z)=\sum_{\xi \in \Gamma_{j, vM+\tau^i(j)}} \langle z, \xi \rangle^{p^{vM+j}},
$$
where $\tau^i$ is the $i^{th}$ iteration of the permutation $\tau$ on $\{1, 2, \dots, M\}$ defined by
\[ \tau(j)=\begin{cases}
j+1, & j<M;\\
1, & j=M.
\end{cases} \]

 By \eqref{eqthm0306},  Lemma \ref{lem02} and \eqref{eqthm030-1}, we get  that
\begin{eqnarray} \label{eqthm0309}
|P_{i, vM+j}(z)|%
&\le& 1+\sum_{m=1}^\infty (m+2)^{2n-2}e^{-m^2\del_{j, v}^2p^{vM+j}/2} \\
&\le& 1+ \sum_{m=1}^\infty (m+2)^{2n-2} e^{-\frac{m^2}{2A^2}} \le 2, \quad z \in \BB, \nonumber
\end{eqnarray}
for all $i, j=1,2, \dots, M$ and $v \ge 0$.

Define
$$
g_{i, j}(z)=\sum_{v=0}^\infty \frac{P_{i, vM+j}(z)}{\mu\left(1-\frac{1}{p^{vM+j}}\right)}, \quad z \in \BB.
$$
By Theorem \ref{thm002},   it is clear that for each $i,j \in \{1,2, \dots, M\}$, $g_{i, j} \in H^\infty_\mu$.

We will show that for every $v \ge 0, 1 \le j \le M$ and $z \in \BB$ with that
\begin{equation} \label{eqthm0308}
1-\frac{1}{p^{vM+j}} \le |z| \le 1-\frac{1}{p^{vM+j+\frac{1}{2}}},
\end{equation}
there exists an $i \in \{1, 2, \dots, M\}$ such that
$$
|g_{i, j}(z)| \ge \frac{C}{\mu(|z|)},
$$
where $C$ is some constant independent of the choice of $i, j$ and $z$.

Fix $v, j$ and $z$ for which \eqref{eqthm0308} holds. Let $z=|z| \eta$ where $\eta \in \SSS$. Since $d$-balls with radius $A\del_{j, v}$ and centers at points of $\Gamma^{j, v}$ cover $\SSS$ by maximality, there exists some $\zeta \in \Gamma^{j, v}$ such that
$$
\eta \in E_{A\del_{j, v}}(\zeta).
$$
Note that $\zeta \in \Gamma_{j, vM+l}$ for some $1 \le l \le M$ by \eqref{eqthm0307} and hence $\zeta \in \Gamma_{j, vM+\tau^i(j)}$ for some $1 \le i \le M$.

We now estimate $|g_{i, j}(z)|$. By \eqref{eqthm0309},
\begin{eqnarray*}
|g_{i, j}(z)|%
&=& \left| \sum_{k=0}^\infty \frac{P_{i, kM+j}(z)}{\mu\left(1-\frac{1}{p^{kM+j}}\right)} \right| \\
&\ge& \left| \frac{P_{i, vM+j}(z)}{\mu\left(1-\frac{1}{p^{vM+j}}\right)}\right|-\left| \sum_{k \neq v} \frac{P_{i, kM+j}(z)}{\mu\left(1-\frac{1}{p^{kM+j}}\right)} \right| \\
&=& \frac{|z|^{p^{vM+j}}|P_{i, vM+j}(\eta)|}{\mu\left(1-\frac{1}{p^{vM+j}}\right)}-\left| \sum_{k \neq v} \frac{|z|^{kM+j}P_{i, kM+j}(\eta)}{\mu\left(1-\frac{1}{p^{kM+j}}\right)} \right| \\
&\ge& \frac{|z|^{p^{vM+j}}|P_{i, vM+j}(\eta)|}{\mu\left(1-\frac{1}{p^{vM+j}}\right)}-2 \sum_{k=0}^{v-1} \frac{|z|^{p^{kM+j}}}{\mu\left(1-\frac{1}{p^{kM+j}}\right)}\\
&& -2 \sum_{k=v+1}^\infty \frac{|z|^{p^{kM+j}}}{\mu\left(1-\frac{1}{p^{kM+j}}\right)}\\
&=& I_1-I_2-I_3,
\end{eqnarray*}
where
$$
I_1=\frac{|z|^{p^{vM+j}}|P_{i, vM+j}(\eta)|}{\mu\left(1-\frac{1}{p^{vM+j}}\right)}, \quad I_2=2 \sum_{k=0}^{v-1} \frac{|z|^{p^{kM+j}}}{\mu\left(1-\frac{1}{p^{kM+j}}\right)}
$$
and
$$
I_3=2 \sum_{k=v+1}^\infty \frac{|z|^{p^{kM+j}}}{\mu\left(1-\frac{1}{p^{kM+j}}\right)}.
$$
Now we estimate $I_1, I_2$ and $I_3$ respectively.

$\bullet$ \textbf{Estimation of $I_1$.}

By \eqref{eqthm0301} and \eqref{eqthm0308}, we obtain
$$
|z|^{p^{vM+j}} \ge \left(1-\frac{1}{p^{vM+j}}\right)^{p^{vM+j}} \ge \frac{1}{3},
$$
and therefore
\begin{eqnarray*}
I_1%
&\ge& \frac{|P_{i, vM+j}(\eta)|}{3\mu\left(1-\frac{1}{p^{vM+j}}\right)}\\
&\ge& \frac{\left(|\langle \eta, \zeta \rangle|^{p^{vM+j}}-\sum_{\xi \in \Gamma_{j, vM+\tau^i(j)}, \ \xi \neq \zeta} |\langle \eta, \xi \rangle|^{p^{vM+j}}\right)}{3\mu\left(1-\frac{1}{p^{vM+J}}\right)}\\
&& \textrm{(By the proof of \cite[Theorem 2.1]{CR})}\\
&\ge& \frac{2}{27\mu\left(1-\frac{1}{p^{vM+J}}\right)}.
\end{eqnarray*}

$\bullet$ \textbf{Estimation of $I_2$.}

By the definition of normal function and \eqref{eqthm0300}, we have for each $s \in \NN$,
$$
\frac{\left(1-\left(1-\frac{1}{p^{sM+j}}\right)\right)^\alpha}{\left(1-\left(1-\frac{1}{p^{(s+1)M+j}}\right)\right)^\alpha} \le \frac{\mu\left(1-\frac{1}{p^{sM+j}}\right)}{\mu\left(1-\frac{1}{p^{(s+1)M+j}}\right)}\le \frac{\left(1-\left(1-\frac{1}{p^{sM+j}}\right)\right)^\beta}{\left(1-\left(1-\frac{1}{p^{(s+1)M+j}}\right)\right)^\beta},
$$
that is,
\begin{equation} \label{eqthm0310}
1<p^{M\alpha} \le\frac{\mu\left(1-\frac{1}{p^{sM+j}}\right)}{\mu\left(1-\frac{1}{p^{(s+1)M+j}}\right)} \le p^{M\beta}.
\end{equation}
Combining this with \eqref{eqthm0302}, we have
\begin{eqnarray*}
I_2%
&\le& 2 \sum_{k=0}^{v-1} \frac{1}{\mu\left(1-\frac{1}{p^{kM+j}}\right)}\\
&=& \frac{2}{\mu\left(1-\frac{1}{p^{vM+j}}\right)} \sum_{k=0}^{v-1} \bigg[ \frac{\mu\left(1-\frac{1}{p^{vM+j}}\right)}{\mu\left(1-\frac{1}{p^{(v-1)M+j}}\right)} \frac{\mu\left(1-\frac{1}{p^{(v-1)M+j}}\right)}{\mu\left(1-\frac{1}{p^{(v-2)M+j}}\right)} \dots  \\
&&\times \frac{\mu\left(1-\frac{1}{p^{(k+1)M+j}}\right)}{\mu\left(1-\frac{1}{p^{kM+j}}\right)}\bigg]\\
&\le& \frac{2}{\mu\left(1-\frac{1}{p^{vM+j}}\right)} \sum_{k=0}^{v-1} \frac{1}{p^{\alpha M(v-k)}}\\
&\le& \frac{2}{\mu\left(1-\frac{1}{p^{vM+j}}\right)} \cdot \frac{1}{p^{\alpha M}-1}\\
&\le& \frac{1}{100\mu\left(1-\frac{1}{p^{vM+j}}\right)}.
\end{eqnarray*}

$\bullet$ \textbf{Estimation of $I_3$.}

Noting that by \eqref{eqthm0301} and \eqref{eqthm0308}, we have
\begin{equation} \label{eqthm0311}
|z|^{p^{vM+j}} \le \left(1-\frac{1}{p^{vM+j+\frac{1}{2}}}\right)^{p^{vM+j+\frac{1}{2}} \cdot p^{-\frac{1}{2}}} \le \left(\frac{1}{2}\right)^{p^{-\frac{1}{2}}}.
\end{equation}

Hence, by \eqref{eqthm0303}, \eqref{eqthm0310} and \eqref{eqthm0311}, we have
\begin{eqnarray*}
I_3%
&=& \frac{2|z|^{p^{(v+1)M+j}}}{\mu\left(1-\frac{1}{p^{vM+j}}\right)} \cdot \sum_{k=v+1}^\infty \bigg[\frac{\mu\left(1-\frac{1}{p^{vM+j}}\right)}{{\mu\left(1-\frac{1}{p^{kM+j}}\right)}} |z|^{\left(p^{kM+j}-p^{(v+1)M+j}\right)}\bigg] \\
&=& \frac{2|z|^{p^{(v+1)M+j}}}{\mu\left(1-\frac{1}{p^{vM+j}}\right)} \cdot \sum_{k=v+1}^\infty \bigg[ \frac{\mu\left(1-\frac{1}{p^{vM+j}}\right)}{\mu\left(1-\frac{1}{p^{(v+1)M+j}}\right)}  \dots \frac{\mu\left(1-\frac{1}{p^{(k-1)M+j}}\right)}{\mu\left(1-\frac{1}{p^{kM+j}}\right)}\\
&&  |z|^{\left(p^{kM+j}-p^{(v+1)M+j}\right)}\bigg] \\
&\le&\frac{2|z|^{p^{(v+1)M+j}}}{\mu\left(1-\frac{1}{p^{vM+j}}\right)} \cdot \sum_{k=v+1}^\infty \left[p^{(\beta M)(k-v)} |z|^{\left(p^{kM+j}-p^{(v+1)M+j}\right)}\right]\end{eqnarray*}
\begin{eqnarray*}
&=& \frac{2|z|^{p^{(v+1)M+j}}}{\mu\left(1-\frac{1}{p^{vM+j}}\right)} \cdot \sum_{k=v+1}^\infty \left[p^{\beta M} p^{(\beta M)(k-v-1)} |z|^{p^j\left(p^{kM}-p^{(v+1)M}\right)}\right]\\
&& \textrm{(Let $s=k-v-1$.)}\\
&=&\frac{2|z|^{p^{(v+1)M+j}}}{\mu\left(1-\frac{1}{p^{vM+j}}\right)} \cdot \sum_{s=0}^\infty\left[p^{\beta M} p^{\beta Ms} |z|^{p^{j+(v+1)M}\left(p^{sM}-1\right)} \right]\\
&& \textrm{(By $p^{sM-1} \ge s(p^M-1)$, where $s$ and $M$ are two postive integers.)}\\
&\le&\frac{2|z|^{p^{(v+1)M+j}}}{\mu\left(1-\frac{1}{p^{vM+j}}\right)} \cdot \sum_{s=0}^\infty\left[p^{\beta M} p^{\beta Ms} |z|^{p^{j+(v+1)M} (p^M-1)s } \right]\\
&=& \frac{2|z|^{p^{(v+1)M+j}}}{\mu\left(1-\frac{1}{p^{vM+j}}\right)} \cdot \sum_{s=0}^\infty\left[p^{\beta M} \left( p^{\beta M} |z|^{\left( p^{(v+2)M+j}-p^{(v+1)M+j}\right) }\right)^s \right]\\
&=& \frac{2}{\mu\left(1-\frac{1}{p^{vM+j}}\right)} \cdot \frac{p^{\beta M}(|z|^{p^{vM+j}})^{p^M}}{1-p^{\beta M} |z|^{p^{vM+j} (p^{2M}-p^M)}}\\
&\le& \frac{2}{\mu\left(1-\frac{1}{p^{vM+j}}\right)} \cdot \frac{p^{\beta M} \cdot 2^{-p^{M-0.5}}}{1-p^{\beta M} \cdot 2^{-(p^{2M-0.5}-p^{M-0.5}})}\\
 &\le &\frac{1}{100\mu\left(1-\frac{1}{p^{vM+j}}\right)}.
\end{eqnarray*}

Combining all the estimates for $I_1, I_2$ and $I_3$, we get
\begin{eqnarray*}
|g_{i, j}(z)|%
&\ge& I_1-I_2-I_3 \ge \frac{1}{\mu\left(1-\frac{1}{p^{vM+j}}\right)} \left (\frac{2}{27}-\frac{1}{100}-\frac{1}{100}\right)\\
&>& \frac{1}{20\mu\left(1-\frac{1}{p^{vM+j}}\right)}=\frac{1}{20\mu\left(1-\frac{1}{p^{vM+j+\frac{1}{2}}}\right)} \cdot \frac{\mu\left(1-\frac{1}{p^{vM+j+\frac{1}{2}}}\right)}{\mu\left(1-\frac{1}{p^{vM+j}}\right)}\\
&\ge& \frac{1}{20p^{\frac{\beta}{2}}\mu\left(1-\frac{1}{p^{vM+j+\frac{1}{2}}}\right)} \ge \frac{1}{20p^{\frac{\beta}{2}}\mu(|z|)}.
\end{eqnarray*}

In summary, we have
\begin{equation} \label{abc01}
\sum_{i=1}^M \sum_{j=1}^M |g_{i, j}(z)| \ge \frac{1}{20p^{\frac{\beta}{2}}\mu(|z|)},
\end{equation}
for all $z$ such that $1-\frac{1}{p^k} \le |z| \le 1-\frac{1}{p^{k+\frac{1}{2}}}, k=1,2, \dots$.

Next, pick a sequence of positive integers $q_k$ such that $0 \le q_k-p^{k+\frac{1}{2}}<1$ and for each $1 \le j \le M$, a sequence of positive numbers $\eps_{j, v}$ such that $A^2q_{vM+j}\eps_{j, v}^2=1$.

Choose a sequence of subsets $\Psi_{j, v}$ of $\SSS$ with the following property: for each nonnegative interger $v$, the set $\bigcup\limits_{l=1}^M \Psi_{j,vM+l}$ is a maximal subset of $\SSS$ which is $d$-seperated by $A\eps_{j, v}/2$, and each $\Psi_{j, vM+l}$ is $d$-seperated by $\eps_{j, v}$.

For each $i, j=1,2, \dots, M$ and $v \ge 0$, set
$$
Q_{i, vM+j}(z)=\sum_{\xi \in \Psi_{j, vM+\tau^i(j)}} \langle z, \xi \rangle^{q_{vM+j}}
$$
and define
$$
h_{i, j}(z)=\sum_{v=0}^\infty \frac{Q_{i, vM+j}(z)}{\mu\left(1-\frac{1}{q_{vM+j}}\right)}.
$$
Then $h_{i, j}$ is in the Hadamard gap since for each $v \ge 0$,
$$
\frac{q_{vM+j}}{q_{(v-1)M+j}} \ge \frac{p^{vM+\frac{1}{2}}}{p^{(v-1)M+\frac{1}{2}}+1} \ge \frac{p^M}{2}>1.
$$
Moreover, the homogeneous polynomials $Q_{i, vM+j}$ are uniformlly bounded by $2$ as before. Hence each $h_{i, j}$ belongs to $H^\infty_\mu$ by Theorem \ref{thm002} and an easy modification of the previous arguments yields for each $v \ge 0, 1 \le j \le M$ and $z \in \BB$ satisfying
$$
1-\frac{1}{p^{vM+j+\frac{1}{2}}} \le |z| \le 1-\frac{1}{p^{vM+j+1}}
$$
there exists an index $i \in \{1, 2, \dots, M\}$, such that
$$
|h_{i, j}(z)| \ge \frac{C_p}{\mu(|z|)},
$$
where $C_p>0$.

Hence
\begin{equation} \label{abc02}
\sum_{i=1}^M \sum_{j=1}^M |h_{i, j}(z)| \ge \frac{C_p}{\mu(|z|)},
\end{equation}
for all $z$ such that $1-\frac{1}{p^{k+\frac{1}{2}}} \le |z| \le 1-\frac{1}{p^{k+1}}, k=1,2, \dots$.

Consequently, we finally have
$$
\sum_{i=1}^M \sum_{j=1}^M \left( |g_{i, j}(z)|+|h_{i, j}(z)|\right) \ge \frac{C}{\mu(|z|)}
$$
for all $z \in \BB$ sufficiently close to the boundary and for some constant $C$.  Therefore the proof is complete.  \hfill{$\Box$}\\

As a corollary, we get the following description of the growth rate on Bers-type space
$H^\infty_\alpha(\alpha>0)$,  by taking $\mu(|z|)=(1-|z|^2)^\alpha$ in Theorem \ref{thm003}.

\begin{cor}
There exists some positive integer $M$ and a sequence of functions $f_i \in H^\infty_\alpha, 1 \le i \le M$,  such that
$$
\sum_{i=1}^M |f_i(z)| \gtrsim \frac{1}{(1-|z|^2)^\alpha}, \quad z \in \BB.
$$
\end{cor}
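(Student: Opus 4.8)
The plan is to recognize the Corollary as the special case $\mu(|z|)=(1-|z|^2)^\alpha$ of Theorem \ref{thm003}, so that the entire task reduces to checking that this particular weight is a normal function in the sense of (1.1). Once normality is established, Theorem \ref{thm003} applies verbatim and delivers the integer $M=M(n)$ together with functions $f_i\in H^\infty_\mu$, $1\le i\le M$, satisfying $\sum_{i=1}^M|f_i(z)|\gtrsim 1/\mu(|z|)=(1-|z|^2)^{-\alpha}$, which is precisely the claimed estimate on the Bers-type space $H^\infty_\alpha$.

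To verify normality I would write $\mu(r)=(1-r^2)^\alpha=(1-r)^\alpha(1+r)^\alpha$ and test the two monotonicity requirements of (1.1) with the choices $\alpha_0=\alpha/2$ and $\beta_0=2\alpha$, so that $0<\alpha_0<\alpha<\beta_0$ (any such pair works). For the decreasing condition I consider $\mu(r)/(1-r)^{\alpha_0}=(1-r)^{\alpha-\alpha_0}(1+r)^\alpha$; its logarithmic derivative equals $-\frac{\alpha-\alpha_0}{1-r}+\frac{\alpha}{1+r}$, whose first term tends to $-\infty$ as $r\to 1^-$, so this quotient is eventually decreasing on some interval $[\del,1)$ and, since $\alpha-\alpha_0>0$, tends to $0$. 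Symmetrically, $\mu(r)/(1-r)^{\beta_0}=(1-r)^{\alpha-\beta_0}(1+r)^\alpha$ has logarithmic derivative $\frac{\beta_0-\alpha}{1-r}+\frac{\alpha}{1+r}$, whose first term tends to $+\infty$, so it is eventually increasing and, since $\alpha-\beta_0<0$, tends to $+\infty$. Thus $\mu$ is positive and continuous on $[0,1)$, satisfies both parts of (1.1) on a common interval $[\del,1)$, and clearly obeys $\lim_{r\to 1^-}\mu(r)=0$; hence $\mu$ is normal.

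There is no genuine obstacle here: the only step requiring verification is the elementary sign analysis above, and even that is routine because the factor $(1+r)^\alpha$ stays between $1$ and $2^\alpha$ near the boundary and therefore influences neither the limits nor the eventual monotonicity. With normality in hand I would simply invoke Theorem \ref{thm003} with $\mu(|z|)=(1-|z|^2)^\alpha$ and read off the conclusion, observing that the integer $M$ it produces depends only on the dimension $n$ through Lemma \ref{lem01}, exactly as the statement asserts.
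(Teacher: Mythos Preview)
Your proposal is correct and matches the paper's approach exactly: the paper simply states that the corollary follows by taking $\mu(|z|)=(1-|z|^2)^\alpha$ in Theorem~\ref{thm003}, without even pausing to verify normality. Your explicit check that $(1-r^2)^\alpha$ is normal (via the logarithmic-derivative argument with exponents $\alpha_0=\alpha/2$ and $\beta_0=2\alpha$) is sound and merely fills in a routine detail the paper takes for granted.
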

\bigskip

\section{Weighted composition operator $uC_\varphi: H^\infty_\mu  \to H(p,q,\phi)$}

In this section, we will use Theorem 3.1 to characterize  the boundedness and compactness of the operator $uC_\varphi: H^\infty_\mu  \to H(p,q,\phi)$. Our main result is the following.\msk

\noindent{\bf Theorem 4.1.} {\it Let $\varphi$ be a holomorphic
self-map of $\mathbb{B}$ and $u\in H(\mathbb{B})$. Suppose that
$0<p,q<\infty$ and $\mu, \phi$ are normal on $[0,1)$. Then the
following statements are equivalent:

\noindent(i) The operator $uC_{\varphi}: H^\infty_\mu\to H(p,q,\phi)$
is  bounded;

\noindent(ii) The operator $uC_{\varphi}: H^\infty_\mu\to H(p,q,\phi)$
is   compact;

\noindent(iii) \begr  \int_0^1 \Big( \int_{\SSS} \frac{|u(r\xi)|^q}{\mu^q(|\varphi(r\xi)| )  }d\sigma(\xi) \Big)^{p/q}
\frac{\phi^p(r)}{1-r} dr<\infty ;\nonumber \endr

 \noindent(iv) \begr   \lim_{t \rightarrow 1}\int_0^1 \Big( \int_{|\varphi(r\xi)|>t} \frac{|u(r\xi)|^q
}{\mu^q(|\varphi(r\xi)| )  }d\sigma(\xi) \Big)^{p/q}\frac{\phi^p(r)}{1-r} dr=0. \nonumber \endr   }

Before proving Theorem 4.1, we  need the following auxiliary result, which can be proved  by standard arguments (see, e.g.,
Proposition 3.11 of \cite{cm}).  \msk

\noindent{\bf Lemma 4.1.} {\it Let $\varphi$ be a holomorphic self-map of $\mathbb{B}$ and $u\in H(\mathbb{B})$. Suppose that
$0<p,q<\infty$ and $\mu, \phi$ are normal on $[0,1)$.  Then $ uC_\varphi:H^\infty_\mu \to H(p,q,\phi)$ is compact if and only if
$uC_\varphi: H^\infty_\mu  \to H(p,q,\phi)$ is bounded and for any bounded sequence $\{f_k\}_{k\in {\NN}}$ in $H^\infty_\mu $ which
converges to zero uniformly on compact subsets of $\mathbb{B}$ as $k\to\infty$, we have $\|uC_\varphi f_k\|_{H(p,q,\phi)}\to 0 $ as
$k\to\infty.$}\\

\noindent{\bf Lemma 4.2.} \cite{shi} {\it If $a>0, b>0$, then the following elementary inequality holds.
\begr
(a+b)^p\leq  \left\{\begin{array}{ccc} a^p+b^p& , & p\in (0,1)\\
2^{p}(a^p+b^p)& , & p\geq 1
\end{array}\right. .\nonumber
\endr}

It is obvious that Lemma 4.2 holds for the sum of finite terms, that is,
$$
(a_1+\cdots +a_j)^p \leq C (a_1^p+\cdots+a_j^p),
$$
where $a_1, \cdots, a_j$ are nonnegative numbers, and $C$ is a positive constant.\\

{\bf Proof of Theorem 4.1.} $\it (ii)\Rightarrow(i)$. It is obvious.

$\it (i)\Rightarrow(iii)$. Suppose that $u C_\varphi: H^\infty_\mu  \to H(p,q,\phi)$ is bounded.  From Theorem 3.1, we
pick functions $ f_1, \cdot\cdot\cdot, f_M \in H^\infty_\mu$ such that
 \begr
\sum^M_{j=1}|f_j(z)| \gtrsim \frac{1}{\mu(|z| )}, ~~z\in\mathbb{B}.  \label{t53}
 \endr
The assumption implies that
 \begr \int_0^1 \Big( \int_{\SSS}   |(uC_\varphi f_j)(r\xi)|^q
d\sigma(\xi) \Big)^{p/q}
\frac{\phi^p(r)}{1-r} dr   <\infty, j= 1, \cdot\cdot\cdot, M, \nonumber
 \endr
which together with (\ref{t53}) and Lemma 4.2 imply
  \begr &&  \int_0^1 \Big( \int_{\SSS} \frac{|u(r\xi)|^q
}{ \mu^q(|\varphi(r\xi)| ) }d\sigma(\xi) \Big)^{p/q}
\frac{\phi^p(r)}{1-r} dr   \nonumber\\
&\lesssim  &   \int_0^1 \Big( \int_{\SSS} |u(r\xi)|^q  \Big(\sum^M_{j=1}
|f_j(\varphi(r\xi))|    \Big)^q
d\sigma(\xi) \Big)^{p/q} \frac{\phi^p(r)}{1-r} dr       \nonumber\\
&\lesssim &  \sum^M_{j=1} \int_0^1 \Big( \int_{\SSS}  |u(r\xi)|^q
|(f_j\circ\varphi)(r\xi)|^q d\sigma(\xi) \Big)^{p/q}
\frac{\phi^p(r)}{1-r} dr   \nonumber\\
&=&  \sum^M_{j=1} \int_0^1 \Big( \int_{\SSS}   |(uC_\varphi f_j)(r\xi)|^q
d\sigma(\xi) \Big)^{p/q}
\frac{\phi^p(r)}{1-r} dr    \nonumber\\
&  <&\infty, \nonumber
\endr
as desired.

$\it (iii)\Rightarrow(iv)$. This implication follows from the
dominated convergence theorem.

$\it (iv)\Rightarrow(ii)$. Assume that $\it (iv)$ holds. To prove that  $u
C_\varphi: H^\infty_\mu \to H(p,q,\phi)$ is compact, it
suffices to prove that if $\{f_k\}_{k\in {\NN}}$ is a bounded sequence in
$H^\infty_\mu $ such that $\{f_k\}_{k\in {\NN}}$ converges to zero uniformly on
compact subsets of $\mathbb{B}$, then  $ \|uC_\varphi f_k\|_{H(p,q,\phi)}\rightarrow 0, ~~\mbox{as} ~~k \rightarrow
\infty. \nonumber$  Take such a sequence $\{f_k\} \subset H^\infty_\mu $. We
have
  \begr
 &&  \int_0^1 \Big( \int_{|\varphi(r\xi)|>t}  |u(r\xi)|^q
|(f_k\circ\varphi)(r\xi)|^q d\sigma(\xi) \Big)^{p/q}
\frac{\phi^p(r)}{1-r} dr \nonumber  \\
  &\leq &  \| f_k\|^p \int_0^1 \Big( \int_{|\varphi(r\xi)|>t} \frac{|u(r\xi)|^q
}{\mu^q(|\varphi(r\xi)| )}d\sigma(\xi) \Big)^{p/q}
\frac{\phi^p(r)}{1-r} dr    \nonumber\\
   &\lesssim &    \int_0^1 \Big( \int_{|\varphi(r\xi)|>t} \frac{|u(r\xi)|^q
}{\mu^q(|\varphi(r\xi)| )}d\sigma(\xi) \Big)^{p/q}
\frac{\phi^p(r)}{1-r} dr     ,\label{t56}
\endr
for all $k$. Take $\varepsilon>0$. $\it (iv)$ and (\ref{t56}) imply
that there exists $t_0 \in (0,1)$ such that
  \begr
   &&\int_0^1 \Big( \int_{|\varphi(r\xi)|>t_0}  |u(r\xi)|^q
|(f_k\circ\varphi)(r\xi)|^q d\sigma(\xi) \Big)^{p/q}
\frac{\phi^p(r)}{1-r} dr < \varepsilon ,\label{t57}
\endr
 for all $  k$.  For the above $\varepsilon$, since $\{f_k \}$ converges to
$0$ on any compact subset of $\mathbb{B}$, there exists a $k_0$
such that
  \begr
   &&\int_0^1 \Big( \int_{|\varphi(r\xi)|\leq t_0}  |u(r\xi)|^q
|(f_k\circ\varphi)(r\xi)|^q d\sigma(\xi) \Big)^{p/q}
\frac{\phi^p(r)}{1-r} dr  < \varepsilon ,\label{t58}
\endr
  for all $k>k_0$. Hence by (\ref{t57}) and (\ref{t58})  we have
  \begr  && \|u C_\varphi f_k  \|_{H(p,q,\phi)} \nonumber\\
&=&\int_0^1 \Big( \int_{\SSS}  |u(r\xi)|^q |(f_k\circ\varphi)(r\xi)|^q
d\sigma(\xi) \Big)^{p/q}
\frac{\phi^p(r)}{1-r} dr \nonumber\\
& \lesssim&  \int_0^1 \Big( \int_{|\varphi(r\xi)|> t_0}  |u(r\xi)|^q
|(f_k\circ\varphi)(r\xi)|^q d\sigma(\xi) \Big)^{p/q}
\frac{\phi^p(r)}{1-r}
dr\nonumber\\
&  & +\int_0^1 \Big( \int_{|\varphi(r\xi)|\leq t_0}  |u(r\xi)|^q
|(f_k\circ\varphi)(r\xi)|^q d\sigma(\xi) \Big)^{p/q}
\frac{\phi^p(r)}{1-r} dr \nonumber\\
&\lesssim &   \varepsilon, ~~\mbox{as}~~k>k_0, \nonumber
\endr
  from which we obtain $\lim_{k \to\infty}\|u C_\varphi f_k
\|_{H(p,q,\phi)}=0.$ Thus $u C_\varphi: H^\infty_\mu  \to
H(p,q,\phi)$ is compact by Lemma 4.1.  This completes the proof of this theorem.\msk

{\bf Acknowledgement.}   This project  was partially supported by the
Macao Science and Technology Development Fund(No.098/2013/A3), NSF
of Guangdong Province(No.S2013010011978) and NNSF of China(No.
11471143).

\end{document}